\newcommand{\bR}{\mathbb{R}}
\newcommand{\bE}{\mathbb{E}}
\newcommand{\bN}{\mathbb{N}}
\newcommand{\cl}{\mathrm{cl}\,}
\newcommand{\co}{\mathrm{co}\,}
\newcommand{\ext}{\mathrm{ext}\,}
\newcommand{\cP}{\mathcal{P}}
\begin{document}




\title*{Asymmetry of Risk and Value of Information}
\titlerunning{Asymmetry of Risk and Value of Information}
\author{Roman V. Belavkin}
\authorrunning{Roman Belavkin}
\institute{Roman V. Belavkin \at Middlesex University, London NW4 4BT, UK,
\email{R.Belavkin@mdx.ac.uk}}

\maketitle



\abstract{
The von Neumann and Morgenstern theory postulates that rational choice under uncertainty is equivalent to maximization of expected utility (EU).  This view is mathematically appealing and natural because of the affine structure of the space of probability measures.  Behavioural economists and psychologists, on the other hand, have demonstrated that humans consistently violate the EU postulate by switching from risk-averse to risk-taking behaviour.  This paradox has led to the development of descriptive theories of decisions, such as the celebrated prospect theory, which uses an $S$-shaped value function with concave and convex branches explaining the observed asymmetry.  Although successful in modelling human behaviour, these theories appear to contradict the natural set of axioms behind the EU postulate.  Here we show that the observed asymmetry in behaviour can be explained if, apart from utilities of the outcomes, rational agents also value information communicated by random events.  We review the main ideas of the classical value of information theory and its generalizations.   Then we prove that the value of information is an $S$-shaped function, and that its asymmetry does not depend on how the concept of information is defined, but follows only from linearity of the expected utility.  Thus, unlike many descriptive and `non-expected' utility theories that abandon the linearity (i.e. the `independence' axiom), we formulate a rigorous argument that the von Neumann and Morgenstern rational agents should be both risk-averse and risk-taking if they are not indifferent to information.
\\
\\
{\bf Keywords}: Decision-making $\cdot$ Expected utility $\cdot$ Prospect theory $\cdot$ Uncertainty $\cdot$ Information}

\section{Introduction}
\label{sec:intro}

A theory of decision-making under uncertainty is extremely important, because it suggests models of rational choice used in many practical applications, such as optimization and control systems, financial decision-support systems and economic policies.  Therefore, the fact that one of the most fundamental principles of such a theory remains disputed for more than half a century is not only intriguing, but points at a lack of understanding with potentially dangerous consequences.  The principle is the von Neumann and Morgenstern expected utility postulate \cite{Neumann-Morgenstern}, which follows very naturally from some fundamental ideas of probability theory, and it has become an essential part of game theory, operations research, mathematical economics and statistics (e.g. \cite{Wald50,Savage54}).  Several researchers, however, were sceptical about the validity of the postulate, and devised clever counter-examples undermining the expected utility idea (e.g. \cite{Allais53,Ellsberg61}).  Psychologists and behavioural economists have studied such examples in experiments and demonstrated consistently over several decades that the expected utility fails to explain human behaviour in some situations of making choice under uncertainty (e.g. see \cite{Tversky81,Huck-Muller12}).  The attempts to dismiss these observations simply by humans' ignorance about game and probability theories were quickly challenged, when professional traders were shown to conform to these `irrational' patterns of decision-making \cite{List-Haigh05}.  A suggestion that the human mind is somehow inadequate for making decisions under uncertainty should be taken with caution, considering that it has evolved over millions of years to do exactly that.

\begin{figure}[t]
\begin{center}
\input{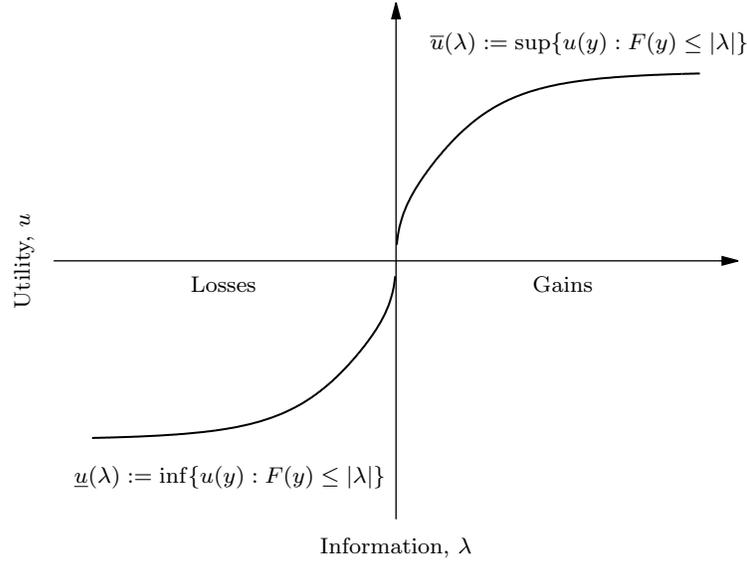}
\end{center}
\caption{An $S$-shaped value function with concave and convex branches used in prospect theory \cite{Kahneman-Tversky79} to model risk-aversion for gains and risk-taking for losses.  These properties also characterize two branches of the value of information: $\overline u(\lambda)$ is concave and plotted here against `positive' information associated with gains; $\underline u(\lambda)$ is convex and plotted against `negative' information associated with losses.}
\label{fig:ui-s}
\end{figure}

One of the most successful behavioural theories explaining the phenomenon is prospect theory \cite{Kahneman-Tversky79}, which suggests that humans value prospects of gains differently from prospects of losses, and therefore their attitude to risk is different in these situations.  To model this asymmetry of risk an $S$-shaped value function with concave and convex branches was proposed (e.g. see Figure~\ref{fig:ui-s}).  Unfortunately, it is precisely this asymmetry that appears to be in conflict with the expected utility theory, and specifically with the axioms that imply its linear (or affine) properties (the so-called `independence' axiom \cite{Machina82}).  Many attempts to develop theories without such axioms have been made, such as the regret theory \cite{Loomes-Sugden82:_regret} and other `non-expected' utility theories (see \cite{Starmer2000,Machina03,Machina04}).  The main aim of this work is to show that another approach is possible, and it involves one important concept emerging from physics and now entering new areas of science, and it is the concept of \emph{entropy}.

Entropy is an information potential, and decision-making under uncertainty can be improved, if some additional information is provided.  This improvement implies that information has utility, and the amalgamation of these two concepts is known as the value of information theory, which was developed in the mid 1960s by Stratonovich and Grishanin as a branch of information theory and theoretical cybernetics \cite{Stratonovich65,Stratonovich-Grishanin66,Stratonovich66:_value_automata,Grishanin-Stratonovich66,Stratonovich67:_inf_dyn,Stratonovich-Grishanin68,Stratonovich75:_inf}.  This theory considered variational problems of maximization or minimization of expected utility subject to constraints on information.  One of many interesting results is an $S$-shaped value function representing the value of information, which resembles the $S$-shaped value function in prospect theory.  Analysis shows that this geometric property is the consequence of linearity of the expected utility functional, and it is independent of any specific definition of information \cite{Belavkin11:_optim}.  Thus, rational agents that are not indifferent to information should value information about gains differently from information about losses, and this may explain the observed asymmetry in humans' attitude towards risk.  The advantage of the proposed approach is that it does not contradict, but generalizes the expected utility postulate.

In the next section, we review the main mathematical principles behind the expected utility postulate.  The presentation of axioms follows the theory of ordered vector spaces, and it allows the author to give a very short and simple proof of the postulate in Theorem~\ref{th:linear-utility}.  The aim of this section is to show that the ideas behind the expected utility are very natural and fundamental.  Section~\ref{sec:risk} overviews several classical examples that are often used in psychological experiments to test humans' preferences and attitude towards risk.  Some examples are presented in a slightly simplified form to illustrate the idea.  The basic concepts of information theory and the classical value of information theory are presented in the first half of Section~\ref{sec:u-of-i}.  Then an abstraction will be made using convex analysis to show that the $S$-shape characterizes the value of an abstract information functional.  We conclude with a brief discussion of the paradoxes.

\section{Linear Theory of Utility}
\label{sec:linear}

We review the definition of a preference relation, its utility representation and the condition of its existence.  Then we show that in the category of linear spaces, such as the vector space of measures, the preference relation should be linear and represented by a linear functional, such as the expected utility.

\subsection{Abstract Choice Sets and Their Representations}

A set $\Omega$ is called an abstract {\em choice} set, if any pair of its elements can be compared by a transitive binary relation $\lesssim$, called the \emph{preference} relation:
\begin{definition}[Preference relation]
A binary relation $\lesssim\subseteq\Omega\times\Omega$ that is
\begin{enumerate}
\item Total\footnote{This property is sometimes called \emph{completeness}, but this term often has other meanings in order theory (e.g. complete partial order) or topology (e.g. complete metric space).}: $a\lesssim b$ or $a\gtrsim b$ for all $a$, $b\in\Omega$.
\item Transitive: $a\lesssim b$ and $b\lesssim c$ implies $a\lesssim c$.
\end{enumerate}
\end{definition}
One can see that $\lesssim$ is a total \emph{pre-order} (reflexivity of $\lesssim$ follows from the fact that it is total).  We shall denote by $\gtrsim$ the inverse relation $(\lesssim)^{-1}$.  We shall distinguish between the strict and non-strict preference relations, which are defined respectively as follows:
\begin{eqnarray*}
a<b&:=&(a\lesssim b)\wedge \neg\,(a\gtrsim b)\\
a\sim b&:=&(a\lesssim b)\wedge (a\gtrsim b)
\end{eqnarray*}
Non-strict preference $\sim$ is also called an {\em indifference}, and it is an equivalence relation.  The quotient set $\Omega/\sim$ defined by this equivalence relation is the set of equivalence classes $[a]:=\{b\in\Omega:a\sim b\}$, which are totally ordered.

It is quite natural in applications to map the choice set to some standard ordered set, such as $\bN$ or $\bR$.  Such numerical mapping is called a \emph{utility representation}:
\begin{definition}[Utility representation of $\lesssim$]
A real function $u:(\Omega,\lesssim)\to(\bR,\leq)$ such that:
\[
a\lesssim b\quad\iff\quad u(a)\leq u(b)
\]
\end{definition}
Observe that the mapping above is monotonic in both directions, which means that utility defines an order-embedding of $(\Omega/\sim,\leq)$ into $(\bR,\leq)$.  Clearly, a utility representation exists for any countable choice set $\Omega$.  For uncountable $\Omega$, the existence of a utility representation is not guaranteed, and it is given by the following condition:
\begin{theorem}[Debreu \cite{Debreu54}]
A utility representation of uncountable $(\Omega,\lesssim)$ exists if and only if there is a countable subset $Q\subset\Omega$ that is {\em order dense}: for all $a<b$ in $\Omega\setminus Q$ there is $q\in Q$ such that $a<q<b$.
\label{th:order-dense}
\end{theorem}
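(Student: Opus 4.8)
The plan is to prove the two implications separately, treating the countable order-dense set $Q$ and the real-valued utility $u$ as dual descriptions of the same countability constraint on $(\Omega,\lesssim)$.

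For the sufficiency direction (a countable order-dense $Q$ yields a representation), I would fix an enumeration $Q=\{q_1,q_2,\dots\}$ and define a candidate utility by summing an absolutely summable sequence of positive weights over the order section cut out by each $\omega$, namely
\[
u(\omega):=\sum_{n:\,q_n\lesssim\omega}2^{-n}.
\]
Monotonicity in one direction is immediate from transitivity: if $a\lesssim b$ then $\{n:q_n\lesssim a\}\subseteq\{n:q_n\lesssim b\}$, whence $u(a)\leq u(b)$. The substantive step is the converse, that $a<b$ forces $u(a)<u(b)$; here I would invoke order density of $Q$ to produce an index $n$ with $q_n\lesssim b$ but $q_n\not\lesssim a$, making the inclusion of sections strict and hence $u(a)<u(b)$. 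The two monotonicity facts together deliver the equivalence $a\lesssim b\iff u(a)\leq u(b)$.

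For the necessity direction (a representation yields a countable order-dense $Q$), I would exploit separability of $\bR$. Given $u$, the image $S:=u(\Omega)\subseteq\bR$ is separable in the subspace topology, so it has a countable dense subset $D$; selecting one preimage for each point of $D$ yields a countable $Q_0\subset\Omega$ that is dense with respect to utility values. To upgrade topological density to order density I would adjoin, for every jump of $S$ — every pair $c<d$ in $S$ with $(c,d)\cap S=\emptyset$ — representatives realizing the endpoints. The family of such jumps is countable, since they index pairwise disjoint open intervals of $\bR$, so the union $Q$ of $Q_0$ with the jump representatives remains countable; for any $a<b$ not separated by a jump one then locates a point of $D$, hence of $Q$, with $a<q<b$.

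The hard part will be the bookkeeping at jumps, i.e. at immediate-successor pairs of indifference classes where the open interval between $u(a)$ and $u(b)$ meets $u(\Omega)$ in the empty set. This is precisely the content of Debreu's \emph{gap lemma}: one must ensure that no pair $a<b$ with nothing strictly between them can have both members outside $Q$, and simultaneously that the constructed $u$ is strictly increasing across such gaps. I would therefore treat strictness of $u$ on jump pairs separately, forcing each jump to contribute an element of $Q$ into the relevant class rather than relying on the generic density argument. Everything else — countability of the union, absolute convergence of the weight series, and the transitivity bookkeeping — is routine.
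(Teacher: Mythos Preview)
The paper does not supply a proof of this theorem; it is stated with attribution to Debreu and then invoked as a black box inside the proof of Theorem~\ref{th:linear-utility}. So there is no in-paper argument to compare your sketch against, and I can only assess it on its own terms.

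Your strategy is the standard one and is essentially sound, but the sufficiency direction has a soft spot that your final paragraph only partly addresses. The formula $u(\omega)=\sum_{q_n\lesssim\omega}2^{-n}$ need not be strictly increasing across a jump pair $a<b$ when $a\in Q$, $b\notin Q$, and no element of $Q$ lies strictly above $a$ and weakly below $b$: in that situation $\{n:q_n\lesssim a\}=\{n:q_n\lesssim b\}$ and hence $u(a)=u(b)$. Concretely, take $\Omega=[0,1]\cup\{2\}$ with the usual order and $Q=\mathbb{Q}\cap[0,1]$; this $Q$ satisfies the hypothesis of the theorem (which only quantifies over $a,b\in\Omega\setminus Q$), yet your construction gives $u(1)=u(2)=1$. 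The order-density condition \emph{as stated} does not directly furnish the separating $q_n$ you invoke when one endpoint already lies in $Q$.

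The remedy is the augmentation you gesture at: before forming the sum, enlarge $Q$ by one representative from each endpoint class of each jump. This keeps $Q$ countable, because every jump must have at least one endpoint class entirely contained in $Q$ (otherwise choose representatives of both classes outside $Q$ and contradict density), distinct classes are disjoint, and each class can border at most two jumps. After that enlargement your sum construction goes through verbatim. Your necessity direction is fine.
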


Note that in optimization theory and its applications one often begins the analysis with a given real objective function $u:\Omega\to\bR$ (e.g. a utility function $u$ or a cost function $-u$).  The preference relation $\lesssim$ is then induced on $\Omega$ by the values $u(\omega)\in\bR$ as a pull-back of order $\leq$ on $\bR$.  This \emph{nuclear} binary relation $\lesssim$ is clearly total and transitive.  Therefore, although some works consider non-total or non-transitive preferences, as well as relations without a utility function, this paper focuses only on choice sets with utility representations.

\subsection{Choice Under Uncertainty}

By definition, a utility representation $u:\Omega\to\bR$ is an embedding of the pre-ordered set $(\Omega,\lesssim)$ into $(\bR,\leq)$, so that the quotient set $(\Omega/\sim,\leq)$ is order-isomorphic to the subset $u(\Omega)\subseteq\bR$.  Recall, however, that $(\bR,\leq)$ is more than just an ordered set --- it is a totally ordered field, in which the order is compatible with the algebraic operations of addition and multiplication, is Archimedian (see below), and it is the only such field.  Suppose that the choice set $\Omega$ is also equipped with some algebraic operations.  Then it appears quite natural if utility $u:\Omega\to\bR$ is compatible also with these algebraic operations, acting as an algebraic isomorphism.  In the language of category theory, utility should be a morphism between objects $\Omega$ and $u(\Omega)\subseteq\bR$ of the same category.  For example, if $\Omega$ is a subset of a real vector space $Y$, then in the category of linear spaces or algebras, like $(\bR,\leq)$, pre-order $(Y,\lesssim)$ (extended from $\Omega\subseteq Y$) should be compatible with the vector space operations
\begin{eqnarray}
x\lesssim y&\iff& \lambda x\lesssim\lambda y\,,\qquad\qquad\forall\,\lambda>0\label{ax:lambda}\\
x\lesssim y&\iff& x+z\lesssim y+z\,,\qquad\forall\,z\in Y\label{ax:addition}
\end{eqnarray}
and Archimedian
\begin{equation}
nx\lesssim y\,,\quad\forall\,n\in\bN\quad\Rightarrow\quad x\lesssim0
\label{ax:Archimedian}
\end{equation}
These three axioms are often assumed in the category of pre-ordered vector spaces.  Note that classical texts on expected utility (e.g. \cite{Neumann-Morgenstern,Savage54}) present these axioms in a different form, because of a restriction to an affine subspace of a vector space due the normalization and positivity conditions for probability measures.  Thus, axioms~(\ref{ax:lambda}) and (\ref{ax:addition}) are combined into the so-called \emph{independence} axiom:
\[
x\lesssim y\ \iff\ \lambda x+(1-\lambda)z\lesssim\lambda y+(1-\lambda)z\,,\qquad\forall\,z\in Y\,,\ \lambda\in(0,1]
\]
The Archimedian axiom~(\ref{ax:Archimedian}) is replaced by the \emph{continuity} axiom:
\[
x\lesssim y\lesssim z\quad\Rightarrow\quad y\sim\lambda x+(1-\lambda)z\,,\qquad\exists\,\lambda\in[0,1]
\]
The author finds it more convenient to work in the category of linear spaces and making the restriction to an affine subspace when necessary.   Thus, we shall assume axioms~(\ref{ax:lambda}), (\ref{ax:addition}) and (\ref{ax:Archimedian}).  Substituting $z=-x-y$ into~(\ref{ax:addition}) gives also
\begin{equation}
x\lesssim y\quad\iff\quad-x\gtrsim-y
\label{eq:negative}
\end{equation}
and together with axiom~(\ref{ax:lambda}) this property also implies that
\begin{equation}
x\sim y\quad\iff\quad\lambda x\sim\lambda y\,,\qquad\forall\,\lambda\in\bR
\label{eq:equivalence}
\end{equation}

The linear or affine algebraic structures occur naturally in measure theory and probabilistic models of uncertainty.  Indeed, consider a probability space $(\Omega,\mathcal{F},P)$, where $\Omega$ is the set of elementary events, $\mathcal{F}\subseteq 2^\Omega$ is a $\sigma$-algebra of events, and $P:\mathcal{F}\to[0,1]$ is a probability measure.  In the context of game theory or economics, the probability measure $P$, defined over the choice set $(\Omega,\lesssim)$ with utility $u:\Omega\to\bR$, is often referred to as a \emph{lottery}, emphasizing the fact that utility is now a random variable (assuming it is $\mathcal{F}$-measurable).  The \emph{expected utility} associated with event $E\subseteq\Omega$ is given by the integral:
\[
\bE_P\{u(E)\}:=\int_Eu(\omega)\,dP(\omega)
\]
In particular, the utility associated with elementary event $a\in\Omega$ can be defined as $u(a)=\int_\Omega u(\omega)\,\delta_a(\omega)$, where $\delta_a$ is the elementary probability measure (i.e. the Dirac $\delta$-measure concentrated entirely on $a\in\Omega$).

Probability measures, or `lotteries', are elements of a vector space, such as, for example, the space $Y=\mathcal{M}_c(\Omega)$ of signed Radon measures on $\Omega$ \cite{Bourbaki63}.  We remind that signed Radon measures are bounded linear functionals $y(f)=\int f\,dy$ on the space $X=\mathcal{C}_c(\Omega,\bR)$ of continuous functions $f:\Omega\to\bR$ with compact support (i.e. $Y=X'$ is the space of distributions dual of the space $X$ of test functions).  Measures that are non-negative $y(E)=\int_Edy\geq0$ for all $E\subseteq\Omega$ form a convex cone in $Y$.  The normalization condition $y(\Omega)=1$ defines an affine set in $Y$, and its intersection with the positive cone defines its base:
\[
\mathcal{P}(\Omega):=\{y\in Y:y(E)\geq0\,,\ y(\Omega)=1\}
\]
The base $\cP(\Omega)$ is the set of all Radon probability measures on $\Omega$.  It is a weakly compact convex set, and by the Krein-Milman theorem each point $p\in\mathcal{P}(\Omega)$ can be represented as a convex combination of its extreme points $\delta_\omega$ --- the elementary measures on $\Omega$.  In fact, $\mathcal{P}(\Omega)$ is a simplex, so that representations are unique, and the set $\ext\,\mathcal{P}(\Omega)$ of extreme points is identified with the set $\Omega$ of elementary events.  Figure~\ref{fig:linear} shows an example of $2$-simplex, which is the set $\cP(\Omega)$ of lotteries over three outcomes $\Omega=\{\omega_1,\omega_2,\omega_3\}$.

A question that arises in this construction is: How should the preference relation $\lesssim$ on $\Omega$ be extended to the set $\cP(\Omega)$ of all `lotteries' over $\Omega$?  Because $\cP(\Omega)$ is a subset of a vector space, it is quite natural to require that $\lesssim$ satisfies axioms~(\ref{ax:lambda}), (\ref{ax:addition}) and (\ref{ax:Archimedian}), and this leads immediately to the following result.

\begin{theorem}[Expected Utility]
A totally pre-ordered vector space $(Y,\lesssim)$ satisfies axioms~(\ref{ax:lambda}), (\ref{ax:addition}) and (\ref{ax:Archimedian}) if and only if $(Y,\lesssim)$ has a utility representation by a closed\footnote{We use the notion of a closed functional, because the topology in $Y$ is not defined.} linear functional $u:Y\to\bR$.
\label{th:linear-utility}
\end{theorem}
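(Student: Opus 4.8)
The plan is to prove the two implications separately, with the forward (sufficiency) direction a short verification and the converse (necessity) carrying essentially all the weight. For sufficiency, suppose a linear functional $u:Y\to\bR$ represents $\lesssim$. Totality and transitivity are inherited from $(\bR,\leq)$. Axiom~(\ref{ax:lambda}) holds because $u(\lambda x)=\lambda u(x)$ and multiplication by $\lambda>0$ preserves $\leq$; axiom~(\ref{ax:addition}) holds because $u(x+z)=u(x)+u(z)$ and $\leq$ is translation invariant. For the Archimedian axiom~(\ref{ax:Archimedian}), if $nx\lesssim y$ for all $n\in\bN$ then $n\,u(x)\leq u(y)$ for all $n$, which forces $u(x)\leq0=u(0)$ by the Archimedian property of $\bR$, i.e. $x\lesssim0$.

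For necessity I would first isolate the kernel of the prospective functional by setting $N:=\{x\in Y:x\sim0\}$. Axiom~(\ref{ax:addition}) with $z=-y$ shows that $x\sim y$ is equivalent to $x-y\in N$, so the $\sim$-classes are exactly the cosets of $N$; closure of $N$ under addition follows from (\ref{ax:addition}) and transitivity, and closure under scalar multiplication from (\ref{eq:equivalence}), so $N$ is a linear subspace. The relation defined by $[x]\lesssim[y]$ precisely when $x\lesssim y$ is then well defined and makes the quotient $V:=Y/N$ a \emph{totally ordered} real vector space; the Archimedian axiom descends verbatim, since $n[x]\lesssim[y]$ for all $n$ lifts to $nx\lesssim y$ for all $n$, whence $x\lesssim0$ and $[x]\lesssim0$.

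The core step is to embed the Archimedian totally ordered vector space $V$ into $(\bR,\leq)$ as an ordered vector space, which is Hölder's theorem. If $V=\{0\}$ the zero functional works; otherwise totality lets me fix an element $e$ with $0<e$, and I would define $u([x]):=\inf\{\lambda\in\bR:x\lesssim\lambda e\}$. The Archimedian axiom guarantees the defining set is nonempty and bounded below: if $\lambda e<x$ for every $\lambda$ then in particular $ne\lesssim x$ for all $n$, forcing $e\lesssim0$ against $0<e$, and the symmetric argument applied to $-x$ supplies a lower bound. Transitivity immediately gives $x\lesssim y\Rightarrow u([x])\leq u([y])$, while homogeneity $u(\lambda[x])=\lambda u([x])$ for $\lambda>0$ follows from axiom~(\ref{ax:lambda}) and extends to all reals via (\ref{eq:equivalence}) and (\ref{eq:negative}). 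Composing with the quotient map $Y\to V$ then yields a linear functional on $Y$.

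The main obstacle is the embedding itself, namely showing that $u$ is simultaneously additive and order-faithful. Subadditivity $u([x]+[y])\leq u([x])+u([y])$ is immediate, since $x\lesssim pe$ and $y\lesssim qe$ give $x+y\lesssim(p+q)e$ by (\ref{ax:addition}) and transitivity; the reverse inequality, and hence additivity, follows once one knows $u$ is odd, which rests on the two half-lines $\{\lambda:\lambda e\lesssim x\}$ and $\{\lambda:x\lesssim\lambda e\}$ forming a Dedekind cut \emph{without a gap}. Establishing this no-gap property, equivalently the strict implication $[x]<[y]\Rightarrow u([x])<u([y])$, is exactly where axiom~(\ref{ax:Archimedian}) is indispensable: if $0<d$ yet $u(d)=0$, then $d\lesssim e/n$, i.e. $nd\lesssim e$, for all $n$, forcing $d\lesssim0$, a contradiction. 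Once linearity and strict monotonicity hold, $x\lesssim y\iff u(y-x)\geq0\iff u(x)\leq u(y)$ delivers the representation; the qualifier \emph{closed} then reflects that, absent a topology on $Y$, the sublevel sets of $u$ coincide with the order-intervals $\{x:x\lesssim\lambda e\}$, which serve as the closed half-spaces. Everything beyond the Hölder step is formal bookkeeping.
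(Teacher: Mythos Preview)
Your proposal is correct but follows a genuinely different route from the paper. For the direction from axioms to representation, the paper argues in two separate steps: first, axioms~(\ref{ax:lambda}) and~(\ref{ax:addition}) force the equivalence classes $[x]$ to be affine sets; second, axiom~(\ref{ax:Archimedian}) produces a countable order-dense subset $Q=\{mz/n:z>0,\ m/n\in\mathbb{Q}\}$, so Debreu's Theorem~\ref{th:order-dense} supplies \emph{some} utility representation, which can then be taken linear because its level sets are affine. You instead pass to the quotient $V=Y/N$ by the kernel $N=[0]$, obtain an Archimedian totally ordered real vector space, and construct the embedding into $\bR$ directly via $u([x])=\inf\{\lambda:x\lesssim\lambda e\}$, which is H\"older's theorem adapted to the vector-space setting. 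Your approach is more self-contained (no appeal to Debreu) and makes the role of the Archimedian axiom more transparent, since it is used pointwise to rule out gaps in the Dedekind cut; the paper's approach, by contrast, separates the algebraic content (affine level sets) from the order-theoretic content (existence via Debreu), which fits the paper's broader narrative of treating utility as a morphism in the category of ordered linear spaces. For the converse direction, you derive axiom~(\ref{ax:Archimedian}) straight from the Archimedian property of $\bR$, whereas the paper invokes closedness of $u$ through a sequential argument; your version is cleaner here, though it leaves the precise role of the qualifier ``closed'' somewhat implicit, as indeed does the paper's own footnote.
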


\begin{proof}
($\Leftarrow$)
The necessity of axioms~(\ref{ax:lambda}), (\ref{ax:addition}) follows immediately from linearity of functional $u:Y\to\bR$, representing $(Y,\lesssim)$.  The Archimedian axiom~(\ref{ax:Archimedian}) is necessary if $u$ is closed: $u(x)=\upsilon$ for every convergent sequence $x_n\to x$ such that $u(x_n)\to\upsilon$ (i.e. $u(\lim x_n)=\lim u(x_n)$).  Indeed, assume $nz\lesssim y$ for all $n\in\bN$ and some $z>0$.  Then $x_n=y/n\gtrsim z>0$ for all $n\in\bN$, and therefore $\lim u(x_n)\geq u(z)>0$, because $u$ is a representation of $(Y,\lesssim)$.  But $\lim x_n=y\lim(1/n)=y\cdot0=0$, meaning that $u$ is not closed.

($\Rightarrow$)
First, we show that axioms~(\ref{ax:lambda}) and (\ref{ax:addition}) imply that the equivalence classes $[x]:=\{y:x\sim y\}$ are affine.  Indeed, assume they are not affine.  Then there exist two points $x$, $y$ in $[x]$ such that the line passing through them contains a point that does not belong to $[x]$.  That is $(1-\lambda)x+\lambda y\notin[x]$ for some $\lambda\in\bR$ and $x$, $y\in[x]$.  This means, for example, that
\[
x\sim y<(1-\lambda)x+\lambda y
\]
Using property~(\ref{eq:equivalence}), let us replace $\lambda y$ by the equivalent $\lambda x$, so that we have
\[
x\sim y<(1-\lambda)x+\lambda x=x
\]
But $y<x$ contradicts our assumption $x\sim y$ (and $x<x$ is a contradiction as well).  Therefore, for any $x$ and $y$ in $[x]$ the whole line $\{z:z=(1-\lambda)x+\lambda y\,,\ \lambda\in\bR\}$ is also in $[x]$.  Thus, if $(Y,\lesssim)$ has a utility representation, then it can be taken to be an affine or a linear functional $u(y)=\int u\,dy$, because it must have affine level sets $[x]=\{y:u(y)=u(x)\}$.\footnote{An affine functional $h$ and a linear functional $u(y)=h(y)-h(0)$ have isomorphic level sets.}

Second, we prove that axiom~(\ref{ax:Archimedian}) implies that there is a countable order-dense subset $Q\subset Y$, so that $(Y,\lesssim)$ has a utility representation by Theorem~\ref{th:order-dense}.  Indeed, take $Q:=\{mz/n:z>0,\ m/n\in\mathbb{Q}\}$.  Case $x<0<y$ is trivial, therefore consider the case $0<x<y$ (or equivalently $x<y<0$).  Because $z>0$, axiom~(\ref{ax:Archimedian}) implies that $z/n<y-x$ for some $n\in\bN$ or
\[
x<z/n+x<y
\]
If $x\notin Q$, then $x\nsim mz/n\in Q$ for all $m/n\in\mathbb{Q}$, or $mz/n<x<(m+1)z/n$ for some $m$, $n\in \bN$.  But this means $(m+1)z/n<y$, because $(m+1)z/n=z/n+mz/n<z/n+x<y$.  Thus, we have found $q=(m+1)z/n\in Q$ with the property $x<q<y$.\qed
\end{proof}

The restriction of the linear functional $u(y)=\int u\,dy$ to the set $\cP(\Omega)$ of probability measures is the expected utility: $u(y)|_\cP=\bE_P\{u\}=\int u\,dP$.  Thus, Theorem~\ref{th:linear-utility} generalizes the expected utility representation of preference relation $(\cP(\Omega),\lesssim)$ satisfying axioms~(\ref{ax:lambda}), (\ref{ax:addition}) and (\ref{ax:Archimedian}) (the EU postulate \cite{Neumann-Morgenstern}):
\begin{equation}
Q\lesssim P\quad\iff\quad\bE_Q\{u\}\leq\bE_P\{u\}\qquad\forall\,Q,P\in\cP(\Omega)
\label{eq:eu-postulate}
\end{equation}
Note that the direct proof of the above result can be quite complicated (e.g. it spans five pages in \cite{Kreps88}).  The proof using Theorem~\ref{th:linear-utility} appears to be simpler.

\section{Violations of Linearity and Asymmetry of Risk}
\label{sec:risk}

The linear theory described above is quite beautiful, because it follows naturally from some basic mathematical principles.  However, its final conclusion, the EU postulate~(\ref{eq:eu-postulate}), appears to be over-simplistic: according to it, a decision-maker should pay attention only to the first moments of utility distributions; all other information, such as their variance or higher order statistics, should be disregarded.  The fact that this idea is rather naive becomes obvious, when one attempts to apply it in practical situations involving money.  Many counter-examples and paradoxes have been discussed in the literature (e.g. see \cite{Allais53,Ellsberg61,Tversky81}).  Here we review some of them with the aim to show that the expected utility does not fully characterize an important aspect of decision-making under uncertainty, and that is the concept of risk.

\subsection{Risk Aversion}
\label{sec:risk-aversion}

Consider the following example:

\begin{example}
Let $\Omega=\{\omega_1,\ldots,\omega_4\}$ be four elementary outcomes that carry utilities $u(\omega)\in\{-\$ 1000,-\$ 1,\$ 1,\$ 1000\}$.  Consider two lotteries over these outcomes:
\[
P(\omega)\in\{0,.5,.5,0\}\,,\quad Q(\omega)\in\{.5,0,0,.5\}
\]
Both lotteries have zero expected utility $\bE_P\{u\}=\bE_Q\{u\}=\$ 0$.  Thus, according to the EU postulate~(\ref{eq:eu-postulate}), a rational agent should be indifferent $P\sim Q$.  However, lottery $Q$ appears to be more `risky', as there is an equal chance of loosing or winning $\$1000$ in $Q$ as opposed to loosing or winning just $\$1$.  Thus, a risk-averse agent should prefer $P>Q$.
\end{example}

This example illustrates that risk is related somehow to the higher order moments of utility distribution, such as variance $\sigma^2(u)$ (i.e. expected squared deviation from the mean).  In fact, financial risk is often defined as the probability of an outcome that is preferred much less than the expected outcome (i.e. the probability of negative deviation $u(\omega)-\bE_P\{u\}<0$).  Other higher order statistics can also be useful, and in the next section we discuss entropy and information in relation to risk.  The following example supports this idea.

\begin{example}[The Ellsberg paradox \cite{Ellsberg61}]
The lotteries $P$ and $Q$ are represented by two urns with 100 balls each.  There are 50 red and 50 white balls in urn $P$; the ratio of red and while balls in urn $Q$ is unknown.  The player is offered to draw a ball from any of the two urns.  If the ball is red, then the player wins $\$100$.  Which of the urns should the player prefer?

The choice can be represented by two lotteries:
\begin{list}{}{}
\item [$P$]: The probabilities of winning $\$100$ and winning nothing are equal: $P(\$100)=P(\$0)=.5$.
\item [$Q$]: The probability of winning $\$100$ is unknown: $Q(\$100)=t\in[0,1]$.
\end{list}
One can check that $\bE_P\{u\}=\bE_Q\{u\}=\int_0^1 \left(\$100\cdot t+\$0\cdot (1-t)\right)\,dt=\$50$.  Thus, the player should be indifferent $P\sim Q$ according to the EU postulate~(\ref{eq:eu-postulate}).  There is an overwhelming evidence, however, that most humans prefer $P>Q$, which suggests that they prefer more information about the parameters of the distribution in this game.
\end{example}

Whether an agent is risk-averse or not may depend on its wealth.  However, it is generally assumed that most rational agents are risk-averse, when unusually high amounts of money are involved, and this is represented by a concave `utility of money' function \cite{Kreps88}.  This is justified by the idea that the utility of gaining $\$1$ relative to some amount $C>0$ is decreasing as $C$ grows.  The origin of this idea is in the St. Petersburg paradox due to Nicolas Bernoulli (1713).

\begin{example}[The St. Petersburg lottery]
The lottery is played by tossing a fair coin repeatedly until the first head appears.  Thus, the set $\Omega$ of elementary events is the set of all sequences of $n\in\bN$ coin tosses.  If the head appeared on the $n$th toss, then the player wins $\$ 2^n$.  Clearly, it is impossible to loose in this lottery.  However, to play the lottery the player must pay an entree fee $C>0$.  The question is: What amount $C>0$ should a rational agent pay?

According to the EU postulate~(\ref{eq:eu-postulate}) the fee $C$ should not exceed the expected utility $\bE_P\{u\}$ of the lottery.  It is easy to see, however, that for a fair coin $P(\omega_n)=2^{-n}$, and therefore the expected utility diverges
\[
\bE_P\{u\}=\sum_{n=1}^\infty \frac{2^n}{2^n}
\]
Thus, any amount $C>0$ appears to be a rational fee to pay.  The paradox is that not many people would pay more than $C=\$2$.  The solution proposed by Daniel Bernoulli in \cite{Bernoulli1738} was to convert the utility $2^n\mapsto\log_22^n=n$.  Although this does not resolve the general problem of unbounded expectations (e.g. one can introduce another lottery $Q$ such that $\bE_Q\{\log_2(u)\}$ diverges), this was the first example of a concave function used to represent risk-averse utility.
\end{example}

Note that although the `utility of money' can be concave as a function of $x(\omega)\in\bR$ amount, the expected utility is still a linear functional on the set $\cP(\Omega)$ of lotteries.  The level sets of the expected utility are affine sets corresponding to equivalence classes of lotteries with respect to $\lesssim$ that are parallel to each other (see Figure~\ref{fig:linear}).  The risk-averse concave modification simply gives less weight to higher values $x(\omega)$.  This modification also reduces the variance of the lottery.

\subsection{Risk Taking}
\label{sec:risk-taking}

It is not difficult to introduce a lottery in which risk-taking appears to be rational.

\begin{example}[The `Northern Rock' lottery]
\label{ex:northern-rock}
A player is allowed to borrow any amount $C>0$ from a bank.  When repayment is due, the amount to repay is decided in the St. Petersburg lottery: a fair coin is tossed repeatedly until the first head appears.  If the head appeared on the $n$th toss, then the player has to repay $\$ 2^n$ to the bank (i.e. the utility is $u(\omega_n)=-\$2^n$).  The question is: What amount $C>0$ should a rational agent borrow?

Again, according to the EU postulate~(\ref{eq:eu-postulate}) one should not borrow an amount $C$ that is less than the expected repayment $\bE_P\{-u\}$.  However, assuming that the probability $P(\omega_n)=2^{-n}$ for a fair coin, it is easy to see that the expected repayment diverges, and therefore a rational agent should not borrow at all.  Although the author did not conduct a systematic study of this problem, anecdotal evidence suggests that many people do borrow substantial amounts.  The solution to this paradox can be made similar to \cite{Bernoulli1738} by modifying the utility $-2^n\mapsto-\log_22^n=-n$.  Observe that the utility for repayments is not concave, but convex (negative logarithm), and therefore it appears to represent not risk-averse, but a risk-taking utility.
\end{example}

One of the most striking counter-examples to the expected utility postulate was introduced by Allais \cite{Allais53}.  Similar problems were studied by psychologists \cite{Tversky81}, which demonstrated the importance of how the outcomes are `framed' or perceived by an agent.  There are many versions of this problem, and the version below was used by the author in multiple talks on the subject.

\begin{example}[The Allais paradox \cite{Allais53}]
\label{ex:allais}
Consider which of the two lotteries you prefer to play:
\begin{list}{}{}
\item [$P$]: The player can win $\$300$ with probability $P(\$300)=1/3$, or win nothing with $P(\$0)=2/3$.
\item [$Q$]: The player wins $\$100$ with certainty $Q(\$100)=1$.
\end{list}
One can check that $\bE_P\{u\}=\bE_Q\{u\}=\$100$, which implies indifference $P\sim Q$ according to the EU postulate~(\ref{eq:eu-postulate}).  There is an overwhelming evidence, however, that most humans prefer $P<Q$, which suggests that they are risk-averse in this game.  Consider now another set of two lotteries:
\begin{list}{}{}
\item [$P$]: The player looses $\$300$ with probability $P(-\$300)=1/3$, or looses nothing with $P(\$0)=2/3$.
\item [$Q$]: The player looses $\$100$ with certainty $Q(-\$100)=1$.
\end{list}
Again, it is easy to check that $\bE_P\{u\}=\bE_Q\{u\}=-\$100$, corresponding to indifference $P\sim Q$ according to the EU postulate~(\ref{eq:eu-postulate}).  However, most humans prefer $P>Q$, which suggests a risk-taking behaviour.
\end{example}

This phenomenon of switching from risk-averse to risk-taking behaviour (also referred to as the `reflection effect') is observed in a number of other similar problems (e.g. see \cite{Tversky81}).  Specifically, a risk-averse preference is observed when the outcomes are associated with gains (positive change of utility), while a risk-taking preference is observed when the outcomes are associated with losses.  Note that gains can be converted into losses by multiplying their utility by $-1$ and vice versa.  In fact, this reflection was used in the construction of Example~\ref{ex:northern-rock} from the St. Petersburg lottery.  The use of concave functions for a risk-averse utility and convex functions for a risk-taking utility can also be explained using this reflection: recall that function $u(x)$ is concave if and only if $-u(x)$ is convex.

\subsection{Why is This a Paradox?}

The switch from a risk-averse pattern for gains to a risk-taking pattern for losses is quite systematic \cite{Tversky81,Loomes-Sugden82:_regret}, and the Allais paradox was demonstrated in numerous experiments \cite{Huck-Muller12} including professional traders \cite{List-Haigh05}.  This asymmetric perception of risk has been modelled in prospect theory \cite{Kahneman-Tversky79} by an $S$-shaped value function, such as a function shown on Figure~\ref{fig:ui-s}, which has a concave branch for outcomes associated with gains and convex branch for outcomes associated with losses.  Although this descriptive theory has gained significant recognition among psychologists and behavioural economists, the proposed asymmetry of risk appears to violate the beautiful and natural set of axioms behind the expected utility postulate \cite{Neumann-Morgenstern} (specifically, axioms~(\ref{ax:lambda}) and (\ref{ax:addition})).

\begin{figure}[!ht]
\centering
\input{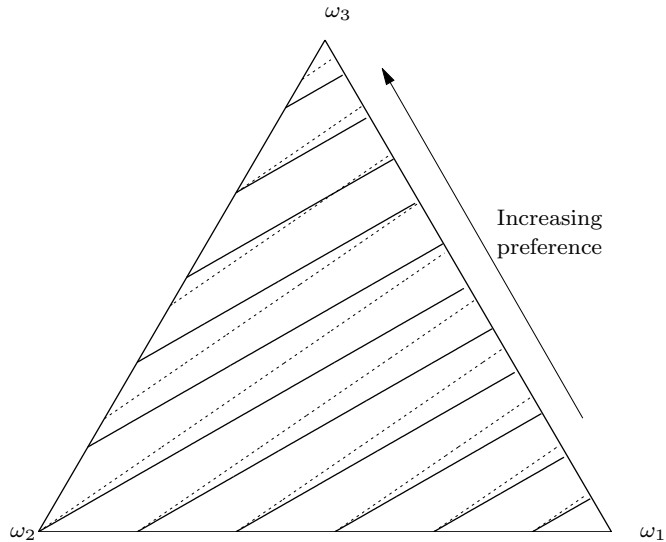}
\caption{Level sets of expected utility on the $2$-simplex of probability measures over set $\Omega=\{\omega_1,\omega_2,\omega_3\}$ with preference $\omega_1<\omega_2<\omega_3$.  Dotted lines represent level sets after a risk-averse modification of the utility function.}
\label{fig:linear}
\end{figure}

As mentioned earlier, the expected utility $\bE_P\{u\}=\int u\,dP$ is a linear functional on the set $\cP(\Omega)$ of probability measures (lotteries) regardless of the `shape' of the utility function $u:\Omega\to\bR$ on the extreme points $\ext\,\cP(\Omega)\equiv\Omega$.  The equivalence classes of the preference relation $\lesssim$ induced on the set of lotteries $\cP(\Omega)$ by the expected utility are the level sets $[\upsilon]:=\{P:\bE_P\{u\}=\upsilon\}$, and they are affine sets.  These level sets are shown on Figure~\ref{fig:linear} by parallel lines, where the triangle (a $2$-simplex) represents the set $\cP(\Omega)$ of lotteries over three elements.  Assuming the preference relation $\omega_1<\omega_2<\omega_3$, and taking the utility of $\omega_2$ as the reference level, lotteries above the reference level set can be considered as gains, while lotteries below the reference as losses.   To model a risk-averse pattern, one has to give lower weight to the outcomes with higher utility.  This change is shown on Figure~\ref{fig:linear} by dotted parallel lines depicting the new level sets.  One can notice that lotteries with higher variances or entropies (these are lotteries closer to the middle point of the simplex) are preferred less than they were before the `risk-averse' modification of utility (they are below the dotted lines).  However, because the level sets are parallel to each other, this change applies equally to gains and losses (i.e. lotteries above and below the reference level).  Thus, if a rational agent uses the expected utility model to rank lotteries, then he only can be risk-averse or risk-taking, but not both.  This observation was illustrated on a $2$-simplex in \cite{Machina82}, and it clearly shows why the expected utility theory alone cannot explain the switch from risk-averse to risk-taking behaviour observed in many examples discussed above.  Thus, it appears that human decision-makers violate the linear axioms~(\ref{ax:lambda}) and (\ref{ax:addition}), and several `non-expected' utility theories have been proposed, such as the regret theory \cite{Loomes-Sugden82:_regret} (see \cite{Starmer2000,Machina03,Machina04} for a review of many others).

\section{Risk and Value of Information}
\label{sec:u-of-i}

As discussed previously, risk is related to a deviation from expected utility, and many examples suggest its relation to variance or higher order statistics of the utility distribution.  Another functional characterizing the distribution is \emph{entropy}, which is closely related to variance and higher order cumulants of a random variable.  Entropy defines the maximum amount of information that a random variable can communicate.  Although information is measured in \emph{bits} or \emph{nats} that have no monetary value, when put in the context of decision-making or estimation, information defines the upper and lower bounds of the expected utility.  This amalgamation of expected utility and information is known as the \emph{value of information} theory pioneered by \cite{Stratonovich65}.  Remarkably, the value of information function has two distinct branches --- one is concave, representing the upper frontier of expected utility, while another is convex, representing the lower frontier of expected utility.  Interestingly, it was shown recently that these geometric properties do not depend on the definition of information itself, but follow from linearity of the expected utility \cite{Belavkin11:_optim}.  In this section, we discuss the classical notion of value of information, its generalization, and how it can be related to asymmetry of risk.

\subsection{Information and Entropy}

Information measures the ability of two or more systems to communicate, and therefore depend on each other.  System $A$ influences system $B$ (or $B$ depends on $A$) if the conditional probability $P(B\mid A)$ is different from the prior probability $P(B)$; or equivalently, if the joint probability $P(A\cap B)$ is different from the product probability $Q(A)\otimes P(B)$ of the marginals.  Shannon defined \emph{mutual information} \cite{Shannon48} as the expectation of the logarithmic difference of these probabilities:
\[
I_S(A,B)
:=\int_{A\times B}\left[\ln\frac{dP(b\mid a)}{dP(b)}\right]\,dP(a,b)
\]
Mutual information is always non-negative with $I_S(A,B)=0$ if and only if $A$ and $B$ are independent (i.e. $P(B\mid A)=P(B)$).  The supremum of $I_S(A,B)$ is attained for $P(B\mid A)$ corresponding to an injective mapping $f:A\to B$, and it can be infinite.  Note that mutual information in this case equals the \emph{entropy} of the marginal distributions.

Indeed, recall that entropy of distribution $P(B)$ is defined as the expectation of its negative logarithm:
\[
H(B):=-\int_B[\ln dP(b)]\,dP(b)
\]
One can rewrite the definition of mutual information as the difference of marginal and conditional entropies:
\[
I_S(A,B)=H(B)-H(B\mid A)=H(A)-H(A\mid B)
\]
When $P(B\mid A)$ corresponds to a function $f:A\to B$, the conditional entropy is zero $H(B\mid A)=0$, and the mutual information equals entropy $H(B)$.  For example, by considering $A\equiv B$ one can define entropy as \emph{self-information} $I_S(B,B)=H(B)-H(B\mid B)=H(B)$ (i.e. $P(B\mid B)$ is the identity mapping $\mathrm{id}:B\to B$).  More generally, conditional entropies are zero for any bijection $f:A\to B$, so that $I_S(A,B)=H(A)=H(B)$ is the supremum of $I_S(A,B)$.  Thus, we can give the following variational definition of entropy:
\[
H(B)=I_S(B,B)=\sup_{P(A\cap B)}\left\{I_S(A,B):\int_A dP(B\mid a)\,dQ(a)=P(B)\right\}
\]
where the supremum is taken over all joint probability measures $P(A\cap B)$ such that $P(B)$ is its marginal.  This definition shows that entropy $H(B)$ is an information \emph{potential}, because it represents the maximum information that system $B$ with distribution $P(B)$ can communicate about another system.  In this context, it is called Boltzmann information, and its supremum $\sup H(B)=\ln|B|$ is called Hartley information.

The relation of entropy to information may help in the analysis of choice under uncertainty.  Indeed, lotteries with higher entropy have greater information potential.  Thus, although lotteries $P$ and $Q$ in Example~\ref{ex:allais} have the same expected utilities, their entropies or information potentials are very different.  In fact, because lottery $Q$ in Example~\ref{ex:allais} offers a fixed amount of money with certainty, its entropy is zero.  Information may be useful to a decision-maker and therefore may also carry a utility.

\subsection{Classical Value of Information}

The idea that information may improve the performance of statistical estimation and control systems was developed into a rigorous theory in the mid 1960-es by Stratonovich and Grishanin \cite{Stratonovich65,Stratonovich-Grishanin66,Stratonovich66:_value_automata,Grishanin-Stratonovich66,Stratonovich67:_inf_dyn,Stratonovich-Grishanin68,Stratonovich75:_inf}.  Consider a composite system $A\times B$ with joint distribution $P(A\cap B)=P(B\mid A)\otimes Q(A)$ and a utility function $u:A\times B\to\bR$.  For example, $A$ may represent a system to be estimated or controlled, $B$ may represent an estimator or a controller, and $u(a,b)$ measures the quality of estimation or control (e.g. a negative error).  In game theory, $A\times B$ may represent the set of pure strategies of two players, and $u(a,b)$ a reward function to player $B$.  If there is no information communicated between $A$ and $B$, then the expected utility $\bE_P\{u(a,b)\}$ can be maximized in a standard way by choosing elements $b\in B$ based on the distribution $Q(A)$.  On the other hand, if there is complete information (i.e. $a\in A$ is known or observed), then $u(a,b)$ can be maximized by choosing $b\in B$ for each $a\in A$.  The \emph{value of Shannon's information amount} $\lambda$ (or \emph{$\lambda$-information}) was defined as the maximum expected utility that can be achieved subject to the constraint that mutual information $I_S(A,B)$ does not exceed $\lambda$:
\[
\overline u_S(\lambda):=\sup_{P(B\mid A)}\left\{\bE_P\{u(a,b)\}:I_S(A,B)\leq\lambda\right\}
\]
Note that the expected utility and mutual information above are computed using the joint distributions $P(A\cap B)=P(B\mid A)\otimes Q(A)$, while the maximization is over the conditional probabilities $P(B\mid A)$ with the marginal distribution $Q(A)$ considered to be fixed.  The subscript in $\overline u_S(\lambda)$ denotes that it is the value of information of Shannon type.  Stratonovich also defined the value of information $\overline u_B(\lambda)$ of Boltzmann type, in which maximization is done with the additional constraint that $P(B\mid A)$ must be a function $f:A\to B$ such that the entropy $H(B)=H(f(A))\leq\lambda$, and value of information $\overline u_H(\lambda)$ of Hartley type with the constraint on cardinality $\ln|f(A)|\leq\lambda$ \cite{Stratonovich75:_inf}.  Stratonovich also showed the inequality $\overline u_S(\lambda)\geq\overline u_B(\lambda)\geq\overline u_H(\lambda)$, which follows from the fact that $I_S(A,B)\leq H(f(A))\leq\ln|f(A)|$, and proved a theorem about asymptotic equivalence of all types of $\lambda$-information (Theorems~11.1--2 in~\cite{Stratonovich75:_inf}).

The function $\overline u_S(\lambda)$ defines the upper frontier of the expected utility.  One may also be interested in the lower frontier (i.e. the worst case scenario) defined similarly using minimization:
\[
\underline u_S(\lambda):=\inf_{P(B\mid A)}\left\{\bE_P\{u(a,b)\}:I_S(A,B)\leq\lambda\right\}
\]
Functions $\overline u_S(\lambda)$ and $\underline u_S(\lambda)$ were referred to in \cite{Stratonovich75:_inf} as \emph{normal} and \emph{abnormal} branches of $\lambda$-information, representing respectively the maximal gain $\overline u_S(\lambda)-\overline u_S(0)\geq0$ and the maximal loss $\underline u_S(\lambda)-\underline u_S(0)\leq0$.  Observe that $\underline u_S(\lambda)=-\overline{(-u)}_S(\lambda)$\footnote{Note that $\underline u_S(\lambda)\neq-\overline{u}_S(\lambda)$ in general, and one of the branches may be empty.} (because $\inf u=-\sup(-u)$), which uses the reflection $u(x)\mapsto -u(x)$ to switch between gains and losses, as discussed in Section~\ref{sec:risk-taking} (Example~\ref{ex:allais}).  It was shown in \cite{Stratonovich75:_inf} that the normal branch $\overline u_S(\lambda)$ is concave and non-decreasing, while abnormal branch $\underline u_S(\lambda)$ is convex and non-increasing.  These properties can be used to give the following information-theoretic interpretation of humans' perception of risk.

Indeed, lotteries with non-zero entropy have a non-zero information potential, which means that after playing the lottery, information $\lambda$ may increase or decrease by the amount $\Delta\lambda$.  The value of this potential information, however, can be represented either by the normal branch $\overline u_S(\lambda)$, if lotteries are associated with gains, or by the abnormal branch $\underline u_S(\lambda)$, if lotteries are associated with losses.  Using the absolute value $|\lambda|$ in the constraint $I_S\leq|\lambda|$, one can plot the normal branch $\overline u_S(\lambda)$ against `positive' information $\lambda\geq0$, associated with gains, while the abnormal branch $\underline u_S(\lambda)$ against `negative' information $\lambda\leq0$, associated with losses.  The graph of the resulting function is shown on Figure~\ref{fig:ui-s}, and it is similar to the $S$-shaped value function in prospect theory \cite{Kahneman-Tversky79}, because $\overline u_S(\lambda)$ is concave and $\underline u_S(\lambda)$ is convex.  The normal branch implies risk-aversion in choices associated with gains, because the potential increase $\overline u_S(\lambda+\Delta\lambda)-\overline u_S(\lambda)$ associated with $\Delta\lambda$ is less than the potential decrease $\overline u_S(\lambda)-\overline u_S(\lambda-\Delta\lambda)$.  On the other hand, convexity of the abnormal branch $\underline u_S(\lambda)$ implies risk-taking in choices associated with losses, because the potential increase $\underline u_S(\lambda)-\underline u_S(\lambda+\Delta\lambda)$ is greater than potential decrease $\underline u_S(\lambda-\Delta\lambda)-\underline u_S(\lambda)$ (here, we assume $\lambda\leq0$ as on Figure~\ref{fig:ui-s}).

Unfortunately, this explanation may appear simply as a curious coincidence, because proofs that $\overline u_S(\lambda)$ is concave and $\underline u_S(\lambda)$ is convex are usually based on very specific assumptions about information, such as convexity and differentiability of Shannon's information $I_S(A,B)$ as a functional of probability measures.  It can be shown, however, that the discussed properties of $\lambda$-information hold in a more general setting, when information is understood more abstractly \cite{Belavkin11:_optim}, and they follow only from the linearity of the expected utility, that is from axioms~(\ref{ax:lambda}) and (\ref{ax:addition}).

\subsection{Value of Abstract Information}

In this section, we discuss generalizations of the concept of information and show that the corresponding value functions have concave and convex branches.  Recall that the definition of Shannon's information, as well as entropy, involves a very specific functional --- the Kullback-Leibler divergence $D_{KL}(P,Q)$ \cite{Kullback59}.  If $P$ and $Q$ are two probability measures defined on the same $\sigma$-ring $\mathcal{R}(\Omega)$ of subsets of $\Omega$, and $P$ is absolutely continuous with respect to $Q$, then KL-divergence of $Q$ from $P$ is the expectation $\bE_P\{\ln(P/Q)\}$:
\[
D_{KL}(P,Q):=\int_\Omega\left[\ln\frac{dP(\omega)}{dQ(\omega)}\right]\,dP(\omega)
\]
It plays the role of a distance between distributions, because $D_{KL}(P,Q)\geq0$ for all $P$, $Q$, and $D_{KL}(P,Q)=0$ if and only if $P=Q$, but it is not a metric (in general, symmetry and the triangle inequality do not hold).  The unique property of the KL-divergence is that it satisfies the axiom of additivity of information from independent sources \cite{Khinchin57}:
\[
D_{KL}(P_1\otimes P_2,Q_1\otimes Q_2)=D_{KL}(P_1,Q_1)+D_{KL}(P_2,Q_2)
\]

One can see that Shannon's mutual information $I_S(A,B)$ is the KL-divergence of the prior distribution $P(B)$ from posterior $P(B\mid A)$ (or equivalently of the product $Q(A)\otimes P(B)$ of marginals from the joint distribution $P(A\cap B)$).  Entropy can be interpreted as negative KL-divergence $-D_{KL}(P,\mu)$ of some reference measure $\mu$ (e.g. the Lebesgue measure on $\Omega$) from $P$.  One way to to generalize the notion of information is to consider other information distances.

By a {\em distance} one understands a non-negative function $D:Y\times Y\to\bR\cup\{\infty\}$ such that $y=z$ implies $D(y,z)=0$.  When $D$ is restricted to the set $\cP(\Omega)\subset Y$ of probability measures, we refer to it as an {\em information distance}.  If a closed functional  $F:Y\to\bR\cup\{\infty\}$ is minimized at $y_0$, then the distance $D(y,y_0)$ can be defined by the non-negative difference $F(y)-F(y_0)$.  More generally, a distance associated with $F$ can be defined as follows:

\begin{definition}[$F$-information distance]
A restriction to $\cP(\Omega)\subset Y$ of function $D_F:Y\times Y\to\bR\cup\{\infty\}$ associated with a closed functional $F:Y\to\bR\cup\{\infty\}$ as follows:
\begin{equation}
D_F(y,z):=\inf\{F(y)-F(z)-x(y-z):x\in\partial F(z)\}
\label{eq:distance}
\end{equation}
where $\partial F(z):=\{x\in X:x(y-z)\leq F(y)-F(z),\,\forall y\in Y\}$ is subdifferential of $F$ at $z$.  We define $D_F(y,z)=\infty$ if $\partial F(z)=\varnothing$ or $F(y)=\infty$.
\end{definition}

It follows immediately from the definition of $\partial F(z)$ that $D_F(y,z)\geq0$.  We note also that the notion of subdifferential can be applied to a non-convex function $F$.  However, non-empty $\partial F(z)$ implies $F(z)<\infty$ and $F(z)=F^{\ast\ast}(z)$, $\partial F(z)=\partial F^{\ast\ast}(z)$ (\cite{Rockafellar74}, Theorem~12).  Generally, $F^{\ast\ast}\leq F$, so that $F(y)-F(z)\geq F^{\ast\ast}(y)-F^{\ast\ast}(z)$ if $\partial F(z)\neq\varnothing$.  If $F$ is G\^{a}teaux differentiable at $z$, then $\partial F(z)$ has a single element $x=\nabla F(z)$, called the {\em gradient} of $F$ at $z$.  One can see that distance~(\ref{eq:distance}) is a generalization of the Bregman divergence for the case of a non-convex and non-differentiable $F$.  The KL-divergence is a particular example of Bregman divergence associated with strictly convex and smooth functional $F(y)=\int(\ln y-1)\,dy$.  Thus, an information constraint can be understood geometrically as constraint $D_F(P,Q)\leq\lambda$ on some $F$-information distance, and the value of information has the following geometric interpretation.


Let $X$ and $Y$ be two linear spaces in duality, and let $x(y)=\int x\,dy$ be a linear functional on $Y$.  Recall that the \emph{support function} of set $C\subseteq Y$ is sublinear mapping $sC:X\to\bR\cup\{\infty\}$ defined as
\[
sC(x):=\sup\{x(y):y\in C\}
\]
Because expected utility $\bE_P\{u\}$ is the restriction to $\cP(\Omega)\subset Y$ of linear functional $u(y)=\int u\,dy$, the value of Shannon's mutual information $\overline u_S(\lambda)$ coincides with the support function $sC(\lambda)(u)$ of set $C(\lambda)\subseteq\cP(\Omega)$, defined by the information constraint $I_S(A,B)\leq\lambda$ and evaluated at $u\in X$ corresponding to the utility function $u:\Omega\to\bR$.  Another way to define subsets $C(\lambda)$ is based on the notion of information \emph{resource} \cite{Belavkin11:_optim}.

Let $\{C(\lambda)\}_{\lambda\in\bR}$ be a family of non-empty closed sets such that $C(\lambda_1)\subseteq C(\lambda_2)$ for any $\lambda_1\leq\lambda_2$.  Then the support function $sC(\lambda)(x)$ is non-decreasing for $\lambda$.  The union of all sets $C(\lambda)\times[\lambda,\infty)$ is the epigraph of some closed functional $F:Y\to\bR\cup\{\infty\}$.  In fact, this functional can be defined as $F(y)=\inf\{\lambda:y\in C(\lambda)\}$.  Then each closed set $C(\lambda)$ is a sublevel set $\{y:F(y)\leq\lambda\}$.

\begin{definition}[Information resource]
A restriction to $\cP(\Omega)\subset Y$ of a closed functional $F:Y\to\bR\cup\{\infty\}$.
\end{definition}

A generalized notion of the value of information is given by the support function of subsets $C(\lambda)\subseteq\cP(\Omega)$, defined by constraints either on $F$-information distance from some reference point or on an information resource:
\begin{eqnarray*}
\overline u(\lambda)&:=&\sup\{u(y):F(y)\leq\lambda\}\\
\underline u(\lambda)&:=&\inf\{u(y):F(y)\leq\lambda\}
\end{eqnarray*}
Properties of the above value functions were studied in \cite{Belavkin11:_optim}.  In particular, the following result was proven (Proposition~3, \cite{Belavkin11:_optim}).

\begin{theorem}
Function $\overline u(\lambda)$ is strictly increasing and concave.  Function $\underline u(\lambda)$ is strictly decreasing and convex.
\label{th:concave-convex}
\end{theorem}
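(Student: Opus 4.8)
The plan is to work throughout on the set $\cP(\Omega)$, which is weakly compact and convex. Because $F$ is a \emph{closed} (lower semicontinuous) functional, each sublevel set $C(\lambda)=\{y\in\cP(\Omega):F(y)\leq\lambda\}$ is weakly closed, hence weakly compact; since $u(y)=\int u\,dy$ is a weakly continuous linear functional, the supremum and infimum defining $\overline u(\lambda)$ and $\underline u(\lambda)$ are actually attained. This lets me argue with honest maximizers rather than with $\varepsilon$-approximants. Monotonicity is then immediate: the sublevel sets are nested, $C(\lambda_1)\subseteq C(\lambda_2)$ whenever $\lambda_1\leq\lambda_2$, so a supremum over a larger set can only grow and an infimum can only shrink, giving $\overline u(\lambda_1)\leq\overline u(\lambda_2)$ and $\underline u(\lambda_1)\geq\underline u(\lambda_2)$.

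Concavity of $\overline u$ is the heart of the argument. Fix $\lambda_1,\lambda_2$ and $\alpha\in[0,1]$, and put $\lambda=\alpha\lambda_1+(1-\alpha)\lambda_2$. Let $y_1,y_2\in\cP(\Omega)$ attain $\overline u(\lambda_1),\overline u(\lambda_2)$, so that $F(y_i)\leq\lambda_i$, and form the convex combination $y=\alpha y_1+(1-\alpha)y_2\in\cP(\Omega)$. Two structural facts now combine. By convexity of $F$ (equivalently, convexity of $\epi F$, which is precisely the statement that the graded family $\{C(\lambda)\}$ is closed under convex combinations \emph{across} levels),
\[
F(y)\leq\alpha F(y_1)+(1-\alpha)F(y_2)\leq\alpha\lambda_1+(1-\alpha)\lambda_2=\lambda,
\]
so $y$ is feasible for the constraint at level $\lambda$; and by linearity of $u$,
\[
\overline u(\lambda)\geq u(y)=\alpha\,u(y_1)+(1-\alpha)\,u(y_2)=\alpha\,\overline u(\lambda_1)+(1-\alpha)\,\overline u(\lambda_2).
\]
This is exactly concavity. (The same content can be packaged as: the filled hypograph $\{(\lambda,r):\exists\,y,\ F(y)\leq\lambda,\ u(y)\geq r\}$ is convex, for the two reasons above.) The branch $\underline u$ then costs nothing via the reflection $u\mapsto -u$: since $\underline u(\lambda)=-\sup\{(-u)(y):F(y)\leq\lambda\}$ and $-u$ is again linear, the argument just given shows the inner supremum is concave and non-decreasing, whence $\underline u$ is convex and non-increasing.

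I anticipate that the main obstacle is to make transparent \emph{which} hypothesis the concavity actually consumes. Linearity of $u$ alone is \textbf{not} enough, and neither is mere quasiconvexity of $F$: if $F$ has convex sublevel sets but is not itself convex (take $Y=\bR$, $u(y)=y$, $F(y)=\sqrt{|y|}$, so $C(\lambda)=[-\lambda^2,\lambda^2]$ and $\overline u(\lambda)=\lambda^2$), then $\overline u$ comes out \emph{convex} rather than concave. Thus the proof genuinely needs full convexity of the information functional $F$ (convex epigraph, not just convex level sets), which is exactly the property enjoyed by the Kullback--Leibler divergence and the Bregman-type $F$-information distances of the previous section; this is the only structure beyond linearity of the expected utility that enters.

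The remaining delicate point is upgrading monotonicity to \emph{strict} monotonicity. Non-strictness can occur only if $u$ is already saturated on a small sublevel set, i.e. if the unconstrained maximum $\sup\{u(y):y\in\cP(\Omega)\}$ is attained at some $y^\ast$ with $F(y^\ast)\leq\lambda$ finite; indeed a concave non-decreasing function that is flat on an interval stays flat thereafter, so a failure of strictness forces $\overline u$ to have reached its global maximum. I would rule this out on the relevant range by noting that a linear $u$ attains its maximum on the simplex $\cP(\Omega)$ at an extreme point $\delta_\omega$, and that such deterministic measures carry the largest information value, so for every $\lambda$ below this saturation level the constraint $F(y^\ast)=\lambda$ is active at the optimizer and there exists a feasible ascent direction for $u$ within budget. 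Combined with the established concavity, this yields strict increase of $\overline u$ (and, by reflection, strict decrease of $\underline u$) throughout the range on which the value function has not yet saturated.
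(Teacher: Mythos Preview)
Your argument is sound and reaches the same conclusion, but by a genuinely different route from the paper. The paper attacks the problem through Lagrangian duality: it introduces a multiplier $\beta^{-1}$ for the constraint $F(y)\leq\lambda$, identifies optimizers via $\bar y(\beta)\in\partial F^{\ast}(\beta u)$, observes that $\beta^{-1}=d\overline u/d\lambda$, and then deduces concavity from the fact that $\partial F^{\ast}$ is a monotone operator (so $\beta^{-1}(\lambda)$ is non-increasing). Strict monotonicity is obtained there by noting $\beta^{-1}=0$ only when $\lambda=\sup F$. Your proof, by contrast, is a direct convex-combination argument on the primal side: you mix optimizers at two levels, invoke convexity of $F$ to keep the mixture feasible at the intermediate level, and invoke linearity of $u$ to conclude. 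This is shorter, more elementary, and makes the hypotheses visible; the paper's approach, on the other hand, yields the optimality system and the interpretation of the Lagrange multiplier as the marginal value of information, which is useful structural content your argument does not produce.

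One point deserves emphasis. You correctly observe, with the $F(y)=\sqrt{|y|}$ counterexample, that quasiconvexity (convex sublevel sets) is insufficient and that genuine convexity of $F$ is what drives concavity of $\overline u$. The paper's narrative stresses ``only linearity of the expected utility,'' but its proof in fact passes through $F^{\ast}$ and requires the side condition $\cl\co\{F\leq\lambda\}=\{F^{\ast\ast}\leq\lambda\}$, which is precisely where convexity of $F$ (up to biconjugation) enters. So your explicit isolation of this hypothesis is a clarification rather than an extra assumption. Your treatment of strict monotonicity is a little informal---the ``feasible ascent direction'' sentence would benefit from being stated as: if $\overline u(\lambda_1)=\overline u(\lambda_2)$ with $\lambda_1<\lambda_2$, then by concavity $\overline u$ is constant on $[\lambda_1,\infty)$, hence the unconstrained supremum of $u$ over $\cP(\Omega)$ is already attained within $C(\lambda_1)$; this matches the paper's characterization $\beta^{-1}=0\iff\lambda=\sup F$ and confines any failure of strictness to the saturation endpoint.
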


Here we outline the proof assuming the reader has some knowledge of convex analysis (e.g. see \cite{Rockafellar74,Tikhomirov90:_convex} for references).  See Propositions~1--3 in \cite{Belavkin11:_optim} for more details.

\begin{proof}
Variational problem $\overline u(\lambda)=\sup\{u(y):F(y)\leq\lambda\}$ is solved using the method of Lagrange multipliers.  The Lagrange function is
\[
K(y,\beta^{-1})=u(y)+\beta^{-1}[\lambda-F(y)]
\]
where $\beta^{-1}$ is a Lagrange multiplier associated with constraint $F(y)\leq\lambda$.  The necessary conditions of extremum of $K(y,\beta^{-1})$ are
\[
\bar y(\beta)\in\partial F^\ast(\beta u)\,,\qquad F(\bar y(\beta))=\lambda
\]
where $\partial F^\ast(x):=\{y\in Y:y(z-x)\leq F^\ast(z)-F^\ast(x),\,\forall z\in X\}$ is subdifferential of the dual functional $F^\ast(x)=\sup\{x(y)-F(y)\}$ (i.e. the Legendre-Fenchel transform of $F$).  If the convex closure $\cl\co\{y:F(y)\leq\lambda\}$ of sublevel set coincides with $\{y:F^{\ast\ast}(y)\leq\lambda\}$, then the above conditions are also sufficient.

The function $\beta^{-1}(\lambda)$ is the derivative $d\overline u(\lambda)/d\lambda$, because $\overline u(\lambda)=u(\bar y)+\beta^{-1}[\lambda-F(\bar y)]$.  Also, $\beta^{-1}=d\overline u(\lambda)/d\lambda\geq0$, because $\overline u(\lambda)$ is non-decreasing.  In fact, $\beta^{-1}=0$ if and only if $\lambda =\sup F(y)$, which implies that $\overline u(\lambda)$ is strictly increasing.

The fact that $\overline u(\lambda)$ is concave is proven by showing that its derivative $\beta^{-1}(\lambda)$ is non-increasing.  Consider two solutions $\bar y(\beta_1)$, $\bar y(\beta_2)$ for $\lambda_1\leq\lambda_2$.  Because $\bar y(\beta_i)\in\partial F^\ast(\beta_iu)$ and $\partial F^\ast$ is a monotone operator, we have
\[
(\beta_2-\beta_1)u(\bar y(\beta_2)-\bar y(\beta_1))\geq0
\]
The difference $u(\bar y(\beta_2)-\bar y(\beta_1))\geq0$, because $\overline u(\lambda)=u(\bar y(\beta))$ is non-decreasing.  Therefore, $\beta_2-\beta_1\geq0$, which proves that $\beta^{-1}(\lambda)$ is non-increasing.

The strictly decreasing and convex properties of $\underline u(\lambda)$ follow from the fact that $\underline u(\lambda)=-\overline{(-u)}(\lambda)$, and $\overline{(-u)}(\lambda)$ is strictly increasing and concave, as was shown above.\qed
\end{proof}

\section{Discussion}
\label{sec:discussion}

In this paper, we have reviewed mathematical arguments for the expected utility theory, and some behavioural arguments against it.  Our hope is that by the end of Section~\ref{sec:risk} the reader was sufficiently intrigued by the paradox following from the conflict between the logic and structure of the axiomatic theory of utility on one hand, and our own behaviour that appears to contradict it on the other.  Perhaps the key to this puzzle is in the fact that the contradiction occurs only when humans are presented with the problems, and humans are information seeking agents.  Our mind has evolved to learn and adapt to new information, and this suggests we need to take potential information (entropy) into account.

Analysis shows that the value of information is an $S$-shaped function, which mirrors some of the ideas of prospect theory \cite{Kahneman-Tversky79}, and therefore the value of information theory may explain humans' attitude to risk.  Unlike the descriptive nature of the value function for prospects, however, properties of the value of information are based on rigorous results.  Furthermore, because the value of information is defined as conditional extremum of expected utility, this normative theory does not contradict the axioms of expected utility.  Rather, it generalizes the von Neumann and Morgenstern theory by adding a non-linear component that reflects the agent's preferences about potential information.

\begin{acknowledgement}
This work was supported by UK EPSRC grant EP/H031936/1.
\end{acknowledgement}



\bibliography{rvb,nn,other,newbib,ica,evolution}

\begin{thebibliography}{10}
\providecommand{\url}[1]{{#1}}
\providecommand{\urlprefix}{URL }
\expandafter\ifx\csname urlstyle\endcsname\relax
  \providecommand{\doi}[1]{DOI~\discretionary{}{}{}#1}\else
  \providecommand{\doi}{DOI~\discretionary{}{}{}\begingroup
  \urlstyle{rm}\Url}\fi

\bibitem{Allais53}
Allais, M.: Le comportement de l'homme rationnel devant le risque: {C}ritique
  des postulats et axiomes de l'\'{E}cole americaine.
\newblock Econometrica \textbf{21}, 503--546 (1953)

\bibitem{Belavkin11:_optim}
Belavkin, R.V.: Optimal measures and {M}arkov transition kernels.
\newblock Journal of Global Optimization \textbf{55}, 387--416 (2013)

\bibitem{Bernoulli1738}
Bernoulli, D.: Commentarii acad.
\newblock Econometrica \textbf{22}, 23--36 (1954)

\bibitem{Bourbaki63}
Bourbaki, N.: El\'ements de math\'ematiques. Int\'egration.
\newblock Hermann (1963)

\bibitem{Debreu54}
Debreu, G.: Representation of a preference relation by a numerical function.
\newblock In: R.M. Thrall, C.H. Coombs, R.L. Davis (eds.) Decision Process.
  Wiley, New York (1954)

\bibitem{Ellsberg61}
Ellsberg, D.: Risk, ambiguity, and the {S}avage axioms.
\newblock The Quarterly Journal of Economics \textbf{75}(4), 643--669 (1961)

\bibitem{Grishanin-Stratonovich66}
Grishanin, B.A., Stratonovich, R.L.: Value of information and sufficient
  statistics during an observation of a stochastic process.
\newblock Izvestiya of USSR Academy of Sciences, Technical Cybernetics
  \textbf{6}, 4--14 (1966).
\newblock In Russian

\bibitem{Huck-Muller12}
Huck, S., M{\"u}ller, W.: Allais for all: Revisiting the paradox in a large
  representative sample.
\newblock Journal of Risk and Uncertainty \textbf{44}(3), 261--293 (2012)

\bibitem{Kahneman-Tversky79}
Kahneman, D., Tversky, A.: Prospect theory: An analysis of decision under risk.
\newblock Econometrica \textbf{47}(2), 263--292 (1979)

\bibitem{Khinchin57}
Khinchin, A.I.: Mathematical Foundations of Information Theory.
\newblock Dover, New York (1957)

\bibitem{Kreps88}
Kreps, D.M.: Notes on the Theory of Choice.
\newblock Westview Press, Colorado, USA (1988)

\bibitem{Kullback59}
Kullback, S.: Information Theory and Statistics.
\newblock John Wiley and Sons (1959)

\bibitem{List-Haigh05}
List, J.A., Haigh, M.S.: A simple test of expected utility theory using
  professional traders.
\newblock PNAS \textbf{102}(3), 945--948 (2005)

\bibitem{Loomes-Sugden82:_regret}
Loomes, G., Sugden, R.: Regret theory: An alternative theory of rational choice
  under uncertainty.
\newblock The Economic Journal \textbf{92}(368), 805--824 (1982)

\bibitem{Machina82}
Machina, M.J.: ``{E}xpected utility'' {A}nalysis without the independence
  axiom.
\newblock Econometrica \textbf{50}(2), 277--323 (1982)

\bibitem{Machina03}
Machina, M.J.: States of the world and the state of decision theory.
\newblock In: D.~Meyer (ed.) The Economics of Risk, chap.~2. W. E. Upjohn
  Institute for Employment Research (2003)

\bibitem{Machina04}
Machina, M.J.: Nonexpected utility theory.
\newblock In: J.L. Teugels, B.~Sundt (eds.) Encyclopedia Of Actuarial Science,
  vol.~2, pp. 1173--1179. John Wiley \& Sons, Chichester (2004)

\bibitem{Neumann-Morgenstern}
von Neumann, J., Morgenstern, O.: Theory of games and economic behavior, first
  edn.
\newblock Princeton University Press, Princeton, NJ (1944)

\bibitem{Rockafellar74}
Rockafellar, R.T.: Conjugate Duality and Optimization, \emph{{CBMS-NSF}
  Regional Conference Series in Applied Mathematics}, vol.~16.
\newblock Society for Industrial and Applied Mathematics, PA (1974)

\bibitem{Savage54}
Savage, L.: The Foundations of Statistics.
\newblock John Wiley \& Sons, New York (1954)

\bibitem{Shannon48}
Shannon, C.E.: A mathematical theory of communication.
\newblock Bell System Technical Journal \textbf{27}, 379--423 and 623--656
  (1948)

\bibitem{Starmer2000}
Starmer, C.: Developments in non-expected utility theory: The hunt for a
  descriptive theory of choice under risk.
\newblock Journal of economic literature pp. 332--382 (2000)

\bibitem{Stratonovich65}
Stratonovich, R.L.: On value of information.
\newblock Izvestiya of USSR Academy of Sciences, Technical Cybernetics
  \textbf{5}, 3--12 (1965).
\newblock In Russian

\bibitem{Stratonovich66:_value_automata}
Stratonovich, R.L.: Value of information during an observation of a stochastic
  process in systems with finite state automata.
\newblock Izvestiya of USSR Academy of Sciences, Technical Cybernetics
  \textbf{5}, 3--13 (1966).
\newblock In Russian

\bibitem{Stratonovich67:_inf_dyn}
Stratonovich, R.L.: Extreme problems of information theory and dynamic
  programming.
\newblock Izvestiya of USSR Academy of Sciences, Technical Cybernetics
  \textbf{5}, 63--77 (1967).
\newblock In Russian

\bibitem{Stratonovich75:_inf}
Stratonovich, R.L.: Information Theory.
\newblock Sovetskoe Radio, Moscow, USSR (1975).
\newblock In Russian

\bibitem{Stratonovich-Grishanin66}
Stratonovich, R.L., Grishanin, B.A.: Value of information when an estimated
  random variable is hidden.
\newblock Izvestiya of USSR Academy of Sciences, Technical Cybernetics
  \textbf{3}, 3--15 (1966).
\newblock In Russian

\bibitem{Stratonovich-Grishanin68}
Stratonovich, R.L., Grishanin, B.A.: Game-theoretic problems with information
  constraints.
\newblock Izvestiya of USSR Academy of Sciences, Technical Cybernetics
  \textbf{1}, 3--12 (1968).
\newblock In Russian

\bibitem{Tikhomirov90:_convex}
Tikhomirov, V.M.: Analysis {II}, \emph{Encyclopedia of Mathematical Sciences},
  vol.~14, chap. Convex Analysis, pp. 1--92.
\newblock Springer-Verlag (1990)

\bibitem{Tversky81}
Tversky, A., Kahneman, D.: The framing of decisions and the psychology of
  choice.
\newblock Science \textbf{211}, 453--458 (1981)

\bibitem{Wald50}
Wald, A.: Statistical Decision Functions.
\newblock John Wiley \& Sons, New York (1950)

\end{thebibliography}

\end{document}